\providecommand\@dotsep{5}
\def\listtodoname{List of Todos}
\def\listoftodos{\@starttoc{tdo}\listtodoname}
\newtheorem{theorem}{Theorem}[section]
\newtheorem{lemma}[theorem]{Lemma}
  \theoremstyle{definition}
\newtheorem{remark}[theorem]{Remark}
\newcommand{\dbC}{\mathbb{C}}
\newcommand{\dbH}{{\mathbb H}}
\newcommand{\dbQ}{\mathbb{Q}}
\newcommand{\dbR}{\mathbb{R}}
\newcommand{\dbS}{\mathbb{S}}
\newcommand{\dbZ}{\mathbb{Z}}
\newcommand{\calF}{{\mathcal F}}
\newcommand{\calM}{{\mathcal M}}
\newcommand{\calO}{{\mathcal O}}
\newcommand{\nbeq}{\begin{equation}}
\newcommand{\neeq}{\end{equation}}
\newcommand{\beq}{\begin{equation*}}
\newcommand{\eeq}{\end{equation*}}
\newcommand{\barf}{\underline{\mathrm{F}}}
\DeclareMathOperator{\cd}{cd}
\DeclareMathOperator{\vcd}{vcd}
\DeclareMathOperator{\cdfin}{\underline{cd}}
\DeclareMathOperator{\sing}{sing}
\begin{document}

\title[Groups satisfying certain properties on their finite subgroups]{On the dimension of groups that satisfy 
certain conditions on their finite subgroups}

\author{Luis Jorge S\'anchez Salda\~na}
\address{Departamento de Matemáticas, Facultad de Ciencias, Universidad Nacional Autónoma de México, Circuito Exterior S/N, Cd. Universitaria, Colonia Copilco el Bajo, Delegación Coyoacán, 04510, México D.F., Mexico}
\email{luisjorge@ciencias.unam.mx}


\subjclass[2010]{Primary 20J05; Secondary 57M27}

\date{}


\keywords{Properties (M) and (NM), classifying spaces, virtual cohomological dimension, Bredon cohomological dimension}

\begin{abstract}
We say a group $G$ satisfies properties (M) and (NM) if every non-trivial finite subgroup of $G$ is contained in a unique maximal finite subgroup, and every non-trivial finite maximal subgroup is self-normalizing. We prove that the Bredon cohomological dimension and the virtual cohomological dimension coincide for groups that admit a cocompact model for $\underline{E}G$ and satisfy properties (M) and (NM). Among the examples of groups satisfying these hypothesis are cocompact and arithmetic Fuchsian groups, one relator groups, the Hilbert modular group and $3$-manifold groups.
\end{abstract}
\maketitle

\section{Introduction}

For a group $G$ consider the following properties:
\begin{itemize}
\item[(M)] Every non-trivial finite subgroup of $G$ is contained in a unique maximal finite subgroup of $G$.
\item[(NM)] If $M$ is a non-trivial maximal finite subgroup of $G$ then $N_G(M)=M$, where $N_G(M)$ denotes the normalizer of $M$ in $G$.
\end{itemize}

In this paper $\calF$ will always denote the \emph{family of finite subgroups} of $G$. A \emph{model for the classifying space} $E_\calF G$ (usually denoted $\underline{E}$G), is a $G$-CW-complex $X$ such that every isotropy group is finite and the fixed point set $X^H$ is contractible for every finite subgroup $H$ of $G$. Equivalently, $X$ is a model for $\underline{E}G$ if for every $G$-CW-complex $Y$ with finite isotropy groups, there exists a map, unique up to $G$-homotopy, $Y\to X$. In particular any two models for $\underline{E}G$ are $G$-homotopy equivalent. We say \emph{$G$ is of type $\barf$} if $G$ admits a cocompact model for $\underline{E}G$. 

The \emph{orbit category $\calO_\calF G$ of $G$ with respect to $\calF$} is the category of homogeneous $G$-sets $G/F$, where $F\in\calF$, and morphisms are given by $G$-maps. A Bredon module is a contravariant functor from $\calO_\calF G$ to the category of abelian groups, and a morphism of Bredon modules is a natural transformation. The category of Bredon modules, for $G$ and $\calF$ choosen, is abelian with enough projectives. The \emph{Bredon cohomological dimension} (or proper cohomological dimension) $\cdfin(G)$ of $G$ is the length of the shortest projective resolution of the constant module $\dbZ_\calF$. If $G$ is torsion free then $\cdfin(G)$ is the classical cohomological dimension $\cd(G)$ of $G$.

On the other hand, provided that $G$ is a virtually torsion free group, the \emph{virtual cohomological dimension} $\vcd(G)$ of $G$ is, by definition, the cohomological dimension of a finite index torsion free group. By a well-known theorem due to Serre $\vcd(G)$ does not depend on the finite index subgroup of $G$ that we choose, hence $\vcd(G)$ is a well-defined invariant of $G$.

For every group $G$ we have the following inequality $\vcd(G)\leq\cdfin(G)$. This inequality can be strict due to examples constructed in \cite{LN03,LP17,DS17}. On the other hand $\vcd(G)=\cdfin(G)$ for for the following classes of groups: elementary amenable groups of type $\mathrm{FP}_\infty$ \cite{KMPN09}, $\mathrm{SL}_n(\dbZ)$ \cite{Ash84}, $\mathrm{Out}(F_n)$ \cite{Vo02}, the mapping class group of any surface with boundary components and punctures \cite{AMP14}, any lattice in a classical simple Lie group \cite{ADMS17}, any lattice in the group of isometries of a symmetric space of
non-compact type without Euclidean factors \cite{La19}, and groups acting chamber transitively on a Euclidean building \cite{DMP16}. 

In this note we prove $\vcd(G)=\cdfin(G)$ for groups of type $\barf$ that satisfy properties (M) and (NM). This implies, due to the main theorem of \cite{LM00}, the existence of a cocompact model for $\underline{
E}G$ of dimension $\max\{3,\vcd (G)\}$. Among the examples of groups that satisfy these properties we have:  cocompact Fuchsian groups and arithmetic Fuchsian groups, one relator groups, the Hilbert modular group, and $3$-manifold groups.  It is worth noticing that the class of groups for which our main theorem applies is closed under taking free products, this is a particular case of \Cref{thm:graph:of:groups}.

The proof is very short and relies on \cite[Corollary~3.4]{ADMS17} (\cref{second:criterion} below), which reduces the proof of the main theorem (\Cref{thm:M:NM:vcd:gd}) to computing the dimension of certain fixed point sets of a cocompact $X$ model for $\underline{E}G$. For groups satisfying properties (M) and (NM) we prove that $X$ can be chosen in such a way that the relevant fixed point sets are one-point spaces (\Cref{lemma:collapsing:fixpoints3}), hence they have dimension $0$. 

\subsection*{Acknowledgements}
 The author wishes to  thank Jean-Fran\c cois Lafont for  feedback on a draft of this paper and the referee for very useful comments. This work was funded by the Mexican Council of Science and Technology via the program \textit{Estancias postdoctorales en el extranjero}, and  by the NSF, via grant DMS-1812028.

\section{Preliminaries and main theorem}

We have the following  criterion that we will use to prove $\vcd(G)=\cdfin(G)$ under the hypothesis of our main theorem.

If $G$ acts on a space $X$, and $A$ is an element of $G$, we denote $X^A$ the set consisting of all elements of $X$ fixed by $A$. In a similar way we define $X^H$ for a subgroup  $H$  of $G$ we define .

\begin{theorem}\label{second:criterion}\cite[Corollary~3.4]{ADMS17}
Let $G$ be a virtually torsion-free group of type $\barf$ with a cocompact model $X$ for $\underline{E}G$. Let $K$ be the kernel of the $G$-action on $X$. If $\dim (X^A)< \vcd(G)$ for every finite order element $A$ of $G\setminus K$, then $\vcd(G)=\cdfin(G)$.
\end{theorem}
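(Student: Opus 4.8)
The goal is to show $\cdfin(G)=\vcd(G)$. The inequality $\vcd(G)\le\cdfin(G)$ holds for every group, as recalled in the introduction, so the whole task is to prove the reverse inequality $\cdfin(G)\le\vcd(G)$; write $n:=\vcd(G)$. First I would dispose of the kernel. Since every isotropy group of $X$ is finite and $K$ fixes $X$ pointwise, $K$ is a finite normal subgroup of $G$; passing to $G/K$ if necessary, $X$ is again a cocompact model for $\underline{E}(G/K)$, and neither invariant changes (a torsion-free finite-index subgroup of $G$ meets $K$ trivially and maps isomorphically onto such a subgroup of $G/K$, giving $\vcd(G/K)=\vcd(G)$, while the finite-subgroup orbit categories are identified, giving $\cdfin(G/K)=\cdfin(G)$). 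The hypothesis then reads $\dim(X^A)<n$ for every nontrivial finite-order element $A$. Thus we may and do assume $G$ acts effectively, i.e. $K=1$.

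Next I would introduce the singular subcomplex $X_s:=\bigcup_{A}X^A$, the union running over all nontrivial finite-order $A\in G$; equivalently $X_s$ is the set of points with nontrivial isotropy. It is a $G$-CW-subcomplex, and by hypothesis $\dim(X_s)=\sup_A\dim(X^A)\le n-1$. Hence its Bredon cohomological dimension is at most $n-1$, so $H^m_\calF(X_s;M)=0$ for all $m\ge n$ and every Bredon module $M$. Feeding this into the long exact sequence in Bredon cohomology of the pair $(X,X_s)$, and using that $X$ is a model for $\underline{E}G$ so that $H^*_\calF(X;M)=H^*_\calF(G;M)$, the two outer terms $H^{m-1}_\calF(X_s;M)$ and $H^m_\calF(X_s;M)$ vanish whenever $m>n$, yielding a natural isomorphism $H^m_\calF(G;M)\cong H^m_\calF(X,X_s;M)$ for every $m>n$. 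Because the relative complex is supported on the free cells, this relative group is computed with ordinary coefficients: $H^m_\calF(X,X_s;M)\cong H^m\big(\mathrm{Hom}_{\dbZ G}(C_*(X,X_s),M(G/1))\big)$, where $C_*(X,X_s)$ is a bounded complex of finitely generated free $\dbZ G$-modules.

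It therefore remains to prove that this relative term vanishes for $m>n$, and this is the step I expect to be the main obstacle. The natural first move is to restrict to a torsion-free finite-index subgroup $\Gamma\le G$, for which $\cd(\Gamma)=n$. Over $\dbZ\Gamma$ the same long exact sequence — now with $X\simeq E\Gamma$ and $\dim(X_s)\le n-1$ — shows $H^m(\mathrm{Hom}_{\dbZ\Gamma}(C_*(X,X_s),L))=0$ for all $m>n$ and all coefficient modules $L$, i.e. $C_*(X,X_s)$ has projective dimension at most $n$ over $\dbZ\Gamma$. The real difficulty is to promote this bound from $\dbZ\Gamma$ to $\dbZ G$: a naive transfer only shows that $H^m_\calF(G;M)$ is annihilated by $[G:\Gamma]$ for $m>n$, which is not enough to force vanishing. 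The honest way through is that above the virtual cohomological dimension the cohomology of $G$ is controlled entirely by its finite subgroups — in the sense of Farrell--Tate cohomology, which for a group with cocompact $\underline{E}G$ is computed by the equivariant cohomology of the singular set $X_s$. Since $\dim(X_s)\le n-1$, there is no room for nonzero classes in degrees $m>n$, and the relative term vanishes as required; concretely this is the structural computation of $\cdfin$ above $\vcd$ in terms of the singular set due to Degrijse and Martínez-Pérez.

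Combining the three steps gives $H^m_\calF(G;M)=0$ for all $m>n$ and all $M$, hence $\cdfin(G)\le n=\vcd(G)$, and with the reverse inequality we conclude $\vcd(G)=\cdfin(G)$. The reduction to an effective action and the long exact sequence of the pair are essentially formal; all of the real work is concentrated in controlling the free part, where the torsion left behind by the transfer must be killed by the Farrell--Tate (compact-support) mechanism attached to the cocompactness of $X$ and the finiteness of $\vcd(G)$.
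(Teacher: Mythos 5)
First, a point of reference: the paper does not prove this statement at all --- it is quoted verbatim from \cite[Corollary~3.4]{ADMS17} and used as a black box --- so there is no internal proof to compare yours against; I can only assess your argument on its own terms. Your first two stages are sound and are essentially the right setup: the reduction to an effective action (though ``the orbit categories are identified'' is not literally true, the equality $\cdfin(G)=\cdfin(G/K)$ for $K$ finite normal is a standard fact that can be justified), the long exact sequence of the pair $(X,X_{\sing})$ in Bredon cohomology, the vanishing of $H^m_\calF(X_{\sing};M)$ above $\dim X_{\sing}\leq n-1$, and the identification of the relative term with $H^m\bigl(\mathrm{Hom}_{\dbZ G}(C_*(X,X_{\sing}),M(G/1))\bigr)$ because the relative cells are free. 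You also correctly diagnose that the naive transfer from a torsion-free finite-index $\Gamma$ only annihilates the obstruction by $[G:\Gamma]$.

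The gap is in the step you yourself flag as the main obstacle, and the mechanism you offer to close it does not work as stated. Farrell--Tate cohomology $\widehat{H}^*(G;M)$ agrees with $H^*(G;M)$ above $\vcd(G)$ and is computed by the \emph{equivariant} cohomology of $X_{\sing}$; but that equivariant cohomology is emphatically \emph{not} bounded by $\dim X_{\sing}$, because the cells of $X_{\sing}$ carry nontrivial finite stabilizers whose (Tate) cohomology is nonzero in arbitrarily high degrees. (Already for $G=D_\infty$ one has $\vcd(G)=1$, $\dim X_{\sing}=0$, and $H^m(G;\dbZ)\neq 0$ for all even $m$.) So ``no room for nonzero classes in degrees $m>n$'' is false for the object you invoke; what vanishes above $n$ is the \emph{Bredon} cohomology, which is exactly the assertion to be proved, and your concluding appeal to ``the structural computation of $\cdfin$ above $\vcd$ due to Degrijse and Mart\'{\i}nez-P\'erez'' is an appeal to the theorem itself (Corollary~3.4 of \cite{ADMS17} is a one-line consequence of that computation). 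A genuine way to finish from where you stand is: (i) observe $H_m(C_*(X,X_{\sing}))=0$ for $m>n$, so $B_n=\mathrm{im}(d_{n+1})$ admits a finite free $\dbZ G$-resolution and hence has finite projective dimension over $\dbZ G$; since it is $\dbZ\Gamma$-projective, a Shapiro/duality argument with coefficients $\dbZ G=\mathrm{Coind}_\Gamma^G\dbZ\Gamma$ forces it to be $\dbZ G$-projective, killing all degrees $m\geq n+2$ and giving $\cdfin(G)\leq n+1<\infty$; (ii) for the remaining degree $n+1$, use that a finite Bredon cohomological dimension is detected on \emph{free} Bredon modules $\dbZ[-,G/H]$, whose value at $G/1$ is $\dbZ[G/H]=\mathrm{Coind}_H^G\dbZ$, so the relevant group becomes $H^{n+1}(X/H,X_{\sing}/H;\dbZ)$, which vanishes since $X/H$ is contractible and $\dim(X_{\sing}/H)\leq n-1$. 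Without (i) and (ii), or an honest citation of the Degrijse--Mart\'{\i}nez-P\'erez formula as an external input, the proof is incomplete at precisely the point where all the content lives.
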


\begin{remark}
Note that if $G$ is as in \cref{second:criterion}, then $\vcd(G)$ is finite. Let $H$ be a finite index torsion-free subgroup of $G$. Then a cocompact model $X$ for $\underline{E}G$ is, by restriction, a cocompact model for $\underline{E}H=EH$. Therefore $\vcd(G)=\cd(H)\leq \dim (X/H)<\infty$.
\end{remark}

Our next lemma characterizes properties (M) and (NM) in terms of the existence of a model for $\underline{E}G$ with small fix point sets. Recall that $\calF$ is the family of finite subgroups of $G$.

\begin{lemma}\label{lemma:M:NM2}
Let $G$ be a group. Then the following to conditions are equivalent
\begin{enumerate}
    \item There exists a model $X$ for $\underline{E}G$ with the property that $X^H$ consists of exactly one point for every non-trivial finite subgroup $H$ of $G$. 
    \item Properties (M) and (NM) are true for $G$.
\end{enumerate}
\end{lemma}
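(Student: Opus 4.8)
The plan is to prove the two implications separately. For the direction $(1)\Rightarrow(2)$, I would start from a model $X$ with $X^H$ a single point for each non-trivial finite $H$. The key observation is that for two non-trivial finite subgroups $H_1\subseteq H_2$, we have $X^{H_2}\subseteq X^{H_1}$, and since both are single points, $X^{H_1}=X^{H_2}$. This lets me attach to each non-trivial finite subgroup $H$ the unique point $x_H\in X^{H}$, and conversely to each point $x$ in the image, its stabilizer $G_x$, which is a finite subgroup. I would argue that the assignment $H\mapsto x_H$ organizes the non-trivial finite subgroups into the fibers of a map, and that the maximal finite subgroup containing a given $H$ is exactly the stabilizer $G_{x_H}$. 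To establish (M), I would show that any non-trivial finite subgroup $H$ satisfies $H\subseteq G_{x_H}$, that $G_{x_H}$ is finite (it is an isotropy group), and that $G_{x_H}$ is the unique maximal finite subgroup containing $H$: any finite $M\supseteq H$ fixes $x_H$ (because $X^M\subseteq X^H=\{x_H\}$ forces $x_H\in X^M$ once $X^M\neq\emptyset$, which holds as $X$ is a model for $\underline{E}G$ so $X^M$ is contractible hence nonempty), so $M\subseteq G_{x_H}$.

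For property (NM), given a non-trivial maximal finite subgroup $M$, I would use that $M=G_{x_M}$ by the above, and then observe that the normalizer $N_G(M)$ acts on the fixed-point set $X^{M}=\{x_M\}$, hence fixes $x_M$, so $N_G(M)\subseteq G_{x_M}=M$; the reverse inclusion $M\subseteq N_G(M)$ is automatic, giving $N_G(M)=M$. The mild subtlety here is checking that $n\in N_G(M)$ genuinely fixes $x_M$: if $n M n^{-1}=M$ then $n\cdot X^{M}=X^{nMn^{-1}}=X^{M}$, and since this set is a single point it is fixed pointwise.

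For the harder direction $(2)\Rightarrow(1)$, I would begin with any cocompact (or at least any) model $Y$ for $\underline{E}G$ and modify it so that the non-trivial fixed-point sets collapse to points. Under (M), the non-trivial finite subgroups partition into equivalence classes indexed by their unique maximal finite overgroup; under (NM) each maximal finite $M$ is self-normalizing, which is exactly what is needed to ensure that the $G$-orbit of the \cita{collapsed\cita{} fixed point is a clean copy of $G/M$ and that distinct maximal subgroups do not get identified. The construction is to replace, $G$-equivariantly and simultaneously over each orbit of maximal finite subgroups, the contractible fixed-point set $Y^{M}$ by a single point, using a $G$-pushout / equivariant mapping-cylinder argument that coning off $Y^M$ to a point does not disturb contractibility of the remaining (trivial-subgroup) data. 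I expect this equivariant collapse to be exactly the content of \Cref{lemma:collapsing:fixpoints3} referenced in the introduction, so I would either invoke it or reproduce its pushout construction here, verifying afterward that the resulting space $X$ is still a model for $\underline{E}G$: one must check $X^{H}$ is contractible (in fact a point) for every non-trivial finite $H$ and that $X$ itself (the $H=1$ fixed set) remains contractible.

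The main obstacle is the second direction, and specifically making the equivariant collapse well-defined and globally consistent: one must verify that properties (M) and (NM) together guarantee the collapsing map is $G$-equivariant and injective on the relevant orbit data, so that two distinct maximal finite subgroups $M_1,M_2$ (equivalently, two distinct fixed points being collapsed) are never forced to coincide. Property (M) ensures the indexing by maximal subgroups is unambiguous, while property (NM) controls the stabilizer of each collapsed point, so that after collapsing the isotropy at the new point is exactly $M$ and not something larger; without self-normalization the quotient near a collapsed fixed point could fail to be $G/M$ and contractibility of $X^{H}$ for intermediate $H$ could break. I would therefore devote the bulk of the argument to checking these consistency conditions and the contractibility of $X$ after the collapse.
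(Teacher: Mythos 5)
Your argument for $(1)\Rightarrow(2)$ is correct and is essentially the paper's: you identify the stabilizer $G_{x_H}$ of the unique $H$-fixed point as the unique maximal finite subgroup containing $H$ (using that $X^M$ is nonempty for any finite $M\supseteq H$), and you get (NM) from the fact that $N_G(M)$ permutes the one-point set $X^M$. The problems are in $(2)\Rightarrow(1)$. First, you cannot invoke \cref{lemma:collapsing:fixpoints3}: in the paper its proof \emph{begins} by applying \cref{lemma:M:NM2} to produce the auxiliary model $Z$ with singleton fixed sets, so citing it here is circular. Second, the collapse you describe --- replacing each $Y^M$, $M$ maximal finite, by a point --- does not achieve the goal. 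For a non-maximal non-trivial finite $H\leq M$ one has $Y^M=\{x: G_x=M\}$ (by maximality of $M$ and finiteness of isotropy) sitting inside the generally strictly larger set $Y^H=\{x: H\leq G_x\}$, so after collapsing $Y^M$ the fixed set of $H$ becomes $Y^H/Y^M$, which is contractible but not a point. To make a collapsing argument work you must collapse, equivariantly, the larger sets $C_M=\{x: 1\neq G_x\leq M\}$, and you must then prove each $C_M$ is contractible; this contractibility is exactly the nontrivial content that the paper extracts in \cref{lemma:collapsing:fixpoints3} by comparing $Y$ with a model \emph{already known} to have singleton fixed sets. Your sketch does not supply it.

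The idea you are missing is that $(2)\Rightarrow(1)$ does not require modifying an existing model at all. The paper takes a free contractible $G$-CW-complex $Y$ (a model for $EG$, which always exists) and the discrete $G$-set $I$ of maximal finite subgroups with the conjugation action; by (NM) the stabilizer of $M\in I$ is $N_G(M)=M$, so all isotropy groups of the join $X=Y*I$ (with the diagonal action) are finite. Then $X$ is contractible because it is a union of cones on the contractible space $Y$ glued along their common base, and for non-trivial finite $H$ one has $X^H=Y^H*I^H=\emptyset*I^H=I^H$, which by (M) and (NM) is the single cone point corresponding to the unique maximal finite subgroup containing $H$. This construction sidesteps every consistency issue you flag, whereas the route you propose would require a substantial additional argument to close.
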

\begin{proof}
Assume $X$ is a model for $\underline{E}G$ such that $X^H=\{x_H\}$ for every non-trivial finite subgroup $H$ of $G$. Let $K$ be a non-trivial finite subgroup of $G$. Then $K$ is contained in the stabilizer $S$ of $x_H$ for a unique $H\in \calF$. Note that $S$ is a maximal finite subgroup of $G$. In fact, if $S$ is contained in $F\in \calF$, then $F$ must fix a unique point $y$ of $X$ since $X^F$ consists of a point. In particular, $S$ fixes $y$. Hence by uniqueness $F$ and $S$ fix the same point of $X$. Therefore $F\leq S$. This proves that the stabilizer of any $x_H$ is a finite maximal subgroup of $G$, for all $H\in \calF$. Hence $G$ satisfies (M). The normalizer $N_G(S)$ acts on $X^H$, hence $N_G(S)\leq S$. Therefore $G$ satisfies (NM).

Assume that $G$ satisfies (M) and (NM). Let $Y$ by any model for $EG$. Let $I$ be the set of  finite maximal subgroups of $G$. Note that $G$ acts on $I$ by conjugation. Moreover, the stabilizer of $M\in I$ is $N_G(M)=M$, since $G$ satisfies (NM).  Then, the join $X=Y*I$ is a model for $\underline{E}G$, where the $G$--action on $X$ is the diagonal action. In fact, $X$ is contractible since it can be seen as the union of copies cones of $Y$ glued all together by their common base. Let $H\in \calF$, then $X^H$ consists of the conic point represented by the unique finite maximal subgroup that contains $H$.
\end{proof}

Our next lemma tells us that we can collapse down to a point the fixed point sets of any cocompact model for $\underline{E}G$.

\begin{lemma}\label{lemma:collapsing:fixpoints3}
Let $G$ be a group of type $\barf$ that satisfies properties (M) and (NM). Then $G$ admits a cocompact model $X$ for $\underline{E}G$ such that $X^H$ consists of exactly one point for every non-trivial finite subgroup $H$ of $G$.
\end{lemma}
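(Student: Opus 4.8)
The plan is to start with any cocompact model $Z$ for $\underline{E}G$, which exists since $G$ is of type $\barf$, and to collapse to a point, in a $G$-equivariant fashion, the entire singular region associated with each maximal finite subgroup. For a non-trivial maximal finite subgroup $M$ of $G$, set
\[
D_M \;=\; \bigcup_{1\neq H\leq M} Z^H,
\]
the union being over all non-trivial subgroups $H$ of $M$. Using property (M) one checks that $D_M=\{z\in Z : 1\neq \stab_G(z)\leq M\}$: indeed, if $z\in Z^H$ for some non-trivial $H\leq M$, then $\stab_G(z)$ is a non-trivial finite group whose unique maximal finite subgroup contains $H$ and hence equals $M$. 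From this description two facts follow. First, if $M\neq M'$ are distinct maximal finite subgroups, then $D_M\cap D_{M'}=\emptyset$, since a point in the intersection would have a non-trivial stabilizer contained in two distinct maximal finite subgroups, contradicting (M). Second, $gD_M=D_{gMg^{-1}}$ for every $g\in G$, so the collection $\{D_M\}$ is permuted by $G$, and by disjointness together with (NM) the setwise stabilizer of $D_M$ is $N_G(M)=M$.

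The key step will be to show that each $D_M$ is contractible, and here I would appeal to the nerve lemma. Since $M$ is finite, $D_M$ is covered by the \emph{finite} family of subcomplexes $\{Z^H : 1\neq H\leq M\}$. For any non-empty subfamily $H_1,\dots,H_k$ one has $Z^{H_1}\cap\cdots\cap Z^{H_k}=Z^{\langle H_1,\dots,H_k\rangle}$, and $\langle H_1,\dots,H_k\rangle$ is a non-trivial finite subgroup of $M$; because $Z$ is a model for $\underline{E}G$, this fixed-point set is contractible, and in particular non-empty (it contains $Z^M$). Thus every finite intersection of members of the cover is non-empty and contractible, so the nerve of the cover is a full simplex and hence contractible. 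The nerve lemma then yields that $D_M$ is contractible. Here I use that the $Z^H$ are subcomplexes of the $G$-CW-complex $Z$, which holds after passing to the cellular structure.

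Finally I would define $X$ to be the $G$-space obtained from $Z$ by collapsing each $D_M$ to a single point $p_M$; since the $D_M$ are pairwise disjoint subcomplexes permuted by $G$, this $X$ is again a $G$-CW-complex, and it is cocompact because $Z$ is cocompact and there are only finitely many conjugacy classes of maximal finite subgroups (cocompactness of $\underline{E}G$ forces finitely many conjugacy classes of finite subgroups). All isotropy groups of $X$ remain finite: a collapsed point $p_M$ has stabilizer $\stab_G(D_M)=M$, while the image of a point with trivial stabilizer keeps trivial stabilizer. To see that $X$ is a model for $\underline{E}G$ I would check the fixed-point sets. For a non-trivial finite subgroup $H$, contained in its unique maximal $M$, the only collapsed point fixed by $H$ is $p_M$ (as $p_{M'}$ is fixed by $H$ iff $H\leq M'$ iff $M'=M$), and no image of a free point is fixed by $H$; hence $X^H=\{p_M\}$, a single point. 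For $H=1$ I must check that $X$ itself is contractible: writing $A=\bigsqcup_M D_M$, the inclusion $A\hookrightarrow Z$ is a cofibration and the collapsing map $A\to \bigsqcup_M\{p_M\}$ is a homotopy equivalence because each $D_M$ is contractible, so the pushout map $Z\to X$ is a homotopy equivalence and $X$ is contractible. Therefore $X$ is a cocompact model for $\underline{E}G$ with $X^H$ a single point for every non-trivial finite subgroup $H$, as required. I expect the contractibility of $D_M$, namely the nerve-lemma step, to be the main obstacle, the remaining parts being verifications via the defining properties (M) and (NM).
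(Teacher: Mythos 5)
Your proof is correct, but it takes a genuinely different route from the paper at the key step. The paper does not work intrinsically inside the given cocompact model: it first builds an auxiliary (join) model $Z=Y*I$ via \cref{lemma:M:NM2}, whose singular set is visibly a finite disjoint union of orbits $Gz_M$ with $\stab_G(z_M)=M$, and then uses the uniqueness of $\underline{E}G$ up to $G$-homotopy to transport this structure to the singular set of the cocompact model $Y$: the induced $G$-homotopy equivalence of singular sets yields a $G$-deformation retraction of $Y_{\sing}$ onto a finite union of orbits with contractible fibres, and one collapses those fibres. You instead identify the pieces to be collapsed directly as $D_M=\bigcup_{1\neq H\leq M}Z^H=\{z:1\neq\stab_G(z)\leq M\}$, verify disjointness and that $\stab_G(D_M)=M$ from (M) and (NM), and prove contractibility of each $D_M$ by the nerve lemma applied to the finite cover by the subcomplexes $Z^H$, all of whose intersections are fixed-point sets $Z^{\langle H_1,\dots,H_k\rangle}$ of non-trivial finite subgroups and hence contractible and non-empty. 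Your version has the advantage of being self-contained within the given model and making the contractibility of the collapsed sets completely explicit; the paper's version avoids the nerve lemma by outsourcing the work to \cref{lemma:M:NM2} and the formal universal property of $\underline{E}G$. Two small points to watch: you need the version of the nerve lemma valid for finite covers of a CW-complex by subcomplexes with contractible intersections (the open-cover version does not apply verbatim), which you correctly flag and which holds since fixed-point sets of subgroups are subcomplexes of a $G$-CW-complex; and your identification of $D_M$ uses that every non-trivial finite subgroup lies in \emph{some} maximal finite subgroup, which is part of property (M) as stated. An alternative to the nerve lemma, closer in spirit to the paper: restricting $Z$ to an $M$-space gives a model for $\underline{E}M\simeq_M\ast$, and the singular set of this $M$-space is exactly $D_M$, so $D_M$ is contractible because the singular-set functor preserves equivariant homotopy equivalences.
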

\begin{proof}
Let $Y$ be any cocompact model for $\underline{E}G$. Given a point $y\in Y$, we denote by $Gy$ the $G$-orbit of $y$. Denote by $Y_{\sing}$ the subspace of $Y$ consisting of points with non-trivial isotropy. Note that $Y_{\sing}$ is a $G$-CW-subcomplex of $Y$.

We claim that there exist a finite number of points $y_1,\dots,y_m$ of $Y$ such that the disjoint union $Gy_1\sqcup \cdots \sqcup Gy_m$ is a $G$-deformation retract of $Y_{\sing}$.

By \cref{lemma:M:NM2} there is a model $Z$ for $\underline{E}G$ such that $Z^H$ consists of exactly one point for every non-trivial finite subgroup $H$ of $G$. On the other hand we have unique (up to $G$-homotopy) $G$-maps $f\colon Y\to Z$ and $g\colon Z\to Y$ such that $f\circ g$ and $g\circ f$ are $G$-homotopic to the corresponding identity functions. These functions induce $G$-maps $f'\colon Y_{\sing}\to Z_{\sing}$ and $g'\colon  Z_{\sing} \to Y_{\sing}$, and also, by restriction  $f'\circ g'$ and $g'\circ f'$ are $G$-homotopic to the corresponding identity functions. By construction of $Z$, $Z_{\sing}$ is of the form $\bigsqcup_{M\in \calM} Gz_M$, where $\calM$ is the set of representatives of conjugacy classes of maximal finite subgroups of $G$ and the isotropy of $z_M$ is $M$. Since $G$ admits a cocompact model for $\underline{E}G$, by \cite[Theorem~4.2]{Lu00} $G$ has a finite number of conjugacy classes of finite subgroups. Therefore $Z_{\sing}= Gz_1\sqcup \cdots \sqcup Gz_m$ for certain points $z_1,\dots,z_m$ of $Z$. Define $y_i=f'(z_i)$ for $i=1,\dots ,m$. We can conclude that $Gy_1\sqcup \cdots \sqcup Gy_m$ is a $G$-deformation retract of $Y_{\sing}$. Moreover, if $r\colon Y_{\sing} \to Gy_1\sqcup \cdots \sqcup Gy_m$ is the mentioned retraction, then $r^{-1}(gy_i)$ is contractible and consists of all points $x$ of $Y_{\sing}$ such that $G_x\leq G_{gy_i}= gG_{y_i}g^{-1}$. Hence the setwise stabilizer of $r^{-1}(gy_i)$ is $N_G(G_{g y_i})=G_{g y_i}$.

Define $X$ to be the $G$-CW-complex defined by $Z/\sim$, where $\sim$ is the relation generated by $x\sim y$ if and only if $r(x)=r(y)$. Hence $X$ is $G$-homotopically equivalent to $Z$. Therefore $X$ is a model for $\underline{E}
G$. Clearly $X$ is cocompact and by construction $X^H$ consists of exactly one point if $H$ is a non-trivial finite subgroup of $X$.


\end{proof}

Now we are ready to prove our main theorem.

\begin{theorem}\label{thm:M:NM:vcd:gd}
Let $G$ be a virtually torsion-free group of type $\barf$ that satisfies properties (M) and (NM).  Then $\vcd(G)=\cdfin(G).$
\end{theorem}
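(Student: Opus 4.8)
The plan is to apply the criterion of \cref{second:criterion} to a cleverly chosen cocompact model for $\underline{E}G$. The hypotheses of the main theorem match those of the criterion almost exactly: $G$ is virtually torsion-free and of type $\barf$. What remains is to produce a cocompact model $X$ for $\underml$ whose singular fixed point sets are small enough. This is precisely what \cref{lemma:collapsing:fixpoints3} delivers, so the entire proof reduces to assembling these ingredients in the right order.

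First I would invoke \cref{lemma:collapsing:fixpoints3} to obtain a cocompact model $X$ for $\underline{E}G$ with the property that $X^H$ consists of exactly one point for every non-trivial finite subgroup $H$ of $G$. In particular, for every finite-order element $A$ of $G$, the cyclic group $\langle A\rangle$ is a non-trivial finite subgroup, so $X^A = X^{\langle A\rangle}$ is a single point and hence has dimension $0$.

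Next I would set up \cref{second:criterion}. Let $K$ be the kernel of the $G$-action on $X$. Since $X$ is a model for $\underline{E}G$ the action has finite isotropy groups, so $K$ is itself finite; moreover $K$ fixes all of $X$ pointwise. For every finite-order element $A$ of $G\setminus K$, the element $A$ acts non-trivially on $X$, so $\langle A\rangle$ is a non-trivial finite subgroup and thus $X^A$ is a one-point space, giving $\dim(X^A)=0$. To apply the criterion I need $\dim(X^A)<\vcd(G)$, i.e.\ I need $0<\vcd(G)$. The only way this can fail is $\vcd(G)=0$, which forces $G$ to be finite; but then $G\setminus K$ is either empty or the inequality is vacuous, and in any case $\cdfin(G)=\vcd(G)=0$ holds directly (a finite group has $\cdfin(G)=0$). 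Ruling out or side-stepping this degenerate case is the one point of genuine care, and I expect it to be the main (albeit minor) obstacle; the honest fix is to treat $G$ finite separately, where $\vcd(G)=\cdfin(G)=0$ is immediate, and otherwise observe that $\vcd(G)\geq 1$ so the strict inequality $0<\vcd(G)$ holds.

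Finally, with the hypothesis $\dim(X^A)<\vcd(G)$ verified for every finite-order element $A$ of $G\setminus K$, \cref{second:criterion} yields $\vcd(G)=\cdfin(G)$, completing the proof. The conceptual content all sits in \cref{lemma:collapsing:fixpoints3}; the theorem itself is a short deduction combining that lemma with the external criterion.
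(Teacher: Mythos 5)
Your proposal is correct and follows essentially the same route as the paper: invoke \cref{lemma:collapsing:fixpoints3} to get a cocompact model with one-point fixed sets, dispose of the finite (equivalently $\vcd(G)=0$) case separately, and then apply \cref{second:criterion} using $\dim(X^A)=0<\vcd(G)$. The paper's proof is exactly this argument, phrased by assuming $G$ infinite from the outset.
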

\begin{proof}
If $G$ is finite then there is nothing to prove. From now on we assume $G$ is infinite.

By \Cref{lemma:collapsing:fixpoints3} there exists a cocompact model $X$  for $\underline{E}G$ satisfying that $X^H$ consists of one point for every non-trivial finite subgroup $H$ of $G$. 


On the other hand, since $G$ is infinite and virtually torsion free, we conclude that $\vcd(G)>0$. Hence we have $\dim(X^F)=0< \vcd(G)$ for every non-trivial finite subgroup $F$ of $G$. Therefore by \Cref{second:criterion}, we have $\vcd(G)=\cdfin(G).$ 
\end{proof}

\section{Examples}

Next, we will describe some examples of groups satisfying the hypothesis of \Cref{thm:M:NM:vcd:gd}.

\subsection{Groups in the literature}
\begin{enumerate}
    \item Extensions $1\to \dbZ^n \to G\to F\to 1$ such that $F$ is finite and the conjugation action of $F$ on $\dbZ^n$ is free outside $0\in \dbZ^n$, and $G$ is of type $\barf$.  Properties (M) and (NM) for this groups are stablished in  \cite{DL03}. 
    
    It is worth noticing that, in the more general context of virtually poly-cyclic groups, it is already known that the Bredon cohomological dimension and the virtual cohomological dimension are equal, see for instance \cite[Example~5.26]{Lu05}.
    \item Cocompact Fuchsian groups and arithmetic Fuchsian groups. Let $G$ be a Fuchsian group, i.e.  $G$ acts properly discontinuously and by orientation-preserving isometries on the hyperbolic plane $\dbH$. A subgroup of $G$ is finite and non-trivial if and only if it fixes a unique point in $\dbH$. Also any element of infinite order does not fix any point of $\dbH$ because the action is proper. This implies that $\dbH$ is a model for $\underline{E}G$ such that the point set $\dbH^H$ consists of one point for every non-trivial finite subgroup $H$ of $G$.  Thus by \cref{lemma:M:NM2}, we have that $G$ satisfies properties (M) and (NM).  If additionally $G$ acts cocompactly on $\dbH$, then clearly satisfies the hypothesis of \cref{thm:M:NM:vcd:gd}. If $G$ is an arithmetic Fuchsian group, then the Borel-Serre bordification of $\dbH$ is a cocompact model for $\underline{E}G$ with $X^H$ is a one-point space for $H$ finite non-trivial. For more information about Fuchsian groups see \cite{freitag}.
    \item One relator groups admiting a cocompact model for $\underline{E}G$. Properties (M) and (NM) are verified in \cite{DL03}.
    
    A few comments on the existence of a model for $\underline{E}G$ for one relator groups. For 1-relator groups without torsion, the Cayley 2-complex is an $\underline{E}G = EG$. For 1-
relator groups with torsion, if the relator has the form $w^n$, then the torsion is all conjugate
into the cyclic subgroup generated by $w$, which has order $n$. If you take the version of
the Cayley 2-complex which has just one 2-cell with label $w^n$
 bounding each such word
in the Cayley graph (not the universal cover of the presentation 2-complex which would
have $n$ different 2-cells bounding each such word), this becomes a cocompact model for
$\underline{E}G$ provided that the 2-cells (whose stabilizer is cyclic of order $n$) are subdivided to ensure
that it becomes a $G$-CW-complex.
    \item The Hilbert modular group. A totally real number field $k$ is an algebraic extension of $\dbQ$ such that all its embeddings $\sigma_i:k\to \dbC$ have image contained in $\dbR$. Let $k$ denote a totally real number field of degree $n$ and
$\calO_k$ its ring of integers. The Hilbert modular group is by definition $PSL_2(\calO_k)$. If $K=\dbQ$ we recover the classical modular group $PSL_2(\dbZ)$. Properties (M) and (NM) are verified for the Hilbert modular group in \cite[Lemma~4.3]{BSS}. Since the Hilbert modular group is a lattice in $PSL_2(\dbR)\times\cdots \times PSL_2(\dbR)$, then it is an arithmetic group acting diagonally in the symmetric space $\dbH\times \cdots \times \dbH$. Hence the Borel-Serre bordification again provides a cocompact model for $\underline{E}G$. See \cite{freitag} for more information of the Hilbert modular group.
\end{enumerate}

\subsection{Groups acting on trees and properties (M) and (NM)}
 Let us quickly recall the notation of graph of groups from \cite{Se03}. A graph of groups $\mathbf{Y}$ consists of a graph $Y$ (in the sense of Serre), one group $Y_y$ for every edge $y$ of $Y$, one group $Y_P$ for each vertex $P$ of $Y$, and injective homomorphism $Y_e\to Y_P$ of $P$ is a vertex of the edge $y$. Recall that associated to $\mathbf{Y}$ we have the fundamental group $\pi_1(\mathbf{Y})$ and the Bass-Serre tree $T$, in such a way that $\pi_1(\mathbf{Y})$ acts on $T$ by simplicial automorphism and the quotient graph is isomorphic to $Y$. Denote by $f\colon T\to Y$ the quotient projection.
 
 We will be able to construct more examples using the following theorem.

\begin{theorem}\label{thm:graph:of:groups}
Let $\mathbf{Y}$ be a graph of groups in the sense of Serre with compact underlying graph. Assume that the vertex groups are of type $\barf$ and satisfy properties (M) and (NM), and assume that the edge groups are torsion free. Then the fundamental group $\pi_1(\mathbf{Y})$ of $Y$ is of type $\barf$ and satisfies properties (M) and (NM).
\end{theorem}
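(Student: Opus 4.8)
The plan is to analyze the action of $\pi_1(\mathbf{Y})$ on its Bass-Serre tree $T$ and combine this with the hypotheses on vertex and edge groups. Since the edge groups are torsion free, every nontrivial finite subgroup of $\pi_1(\mathbf{Y})$ must fix a vertex of $T$ (a finite group acting on a tree fixes a point, and since edge stabilizers are torsion free it cannot fix an edge nontrivially, so it lies in a vertex stabilizer). Thus every nontrivial finite subgroup is conjugate into one of the vertex groups $Y_P$.

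First I would verify type $\barf$. A cocompact model for $\underline{E}\pi_1(\mathbf{Y})$ can be assembled from the tree $T$ and cocompact models $\underline{E}Y_P$ for the vertex groups: one takes the "tree of spaces" construction, replacing each vertex of $T$ with a copy of the appropriate $\underline{E}Y_P$ and each edge with an edge-group model (which, being torsion free, is simply $EY_e$), gluing via the inclusions $Y_e \to Y_P$. Because the underlying graph $Y$ is compact (finitely many edges and vertices) and each vertex model is cocompact, the resulting complex is a cocompact $\underml{E}\pi_1(\mathbf{Y})$; contractibility of fixed point sets follows because a finite subgroup fixes a single vertex of $T$ and then acts on that vertex's model with contractible fixed points.

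Next I would establish properties (M) and (NM) using \cref{lemma:M:NM2}: it suffices to produce a model $X$ for $\underline{E}\pi_1(\mathbf{Y})$ in which $X^H$ is a single point for every nontrivial finite $H$. For each vertex group $Y_P$, apply \cref{lemma:M:NM2} to replace its model by one whose nontrivial-finite fixed sets are single points. Assembling these point-collapsed vertex models into the tree of spaces, I would argue that a nontrivial finite $H$ fixes exactly one vertex $v$ of $T$ (uniqueness comes from the torsion-freeness of edge groups: if $H$ fixed two vertices it would fix the edge path between them, forcing $H$ into a torsion-free edge group), and then $X^H$ equals the single fixed point of $H$ inside the model sitting at $v$. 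Hence $X^H$ is a one-point space, and \cref{lemma:M:NM2} yields (M) and (NM) for $\pi_1(\mathbf{Y})$.

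The main obstacle I expect is the bookkeeping in the tree-of-spaces construction, specifically verifying that the gluing along torsion-free edge groups genuinely produces a \emph{cocompact} $G$-CW-complex with the correct fixed-point behavior, and carefully proving the uniqueness of the fixed vertex. The uniqueness argument is where torsion-freeness of edge groups is essential and must be invoked with care: I must rule out a finite group fixing a nondegenerate subtree, which would contradict the edge groups being torsion free. Once the "one fixed vertex, and one fixed point within its model" dichotomy is pinned down, the conclusion follows cleanly from \cref{lemma:M:NM2}, but making the geometric picture into a rigorous $G$-CW argument is the technical heart of the proof.
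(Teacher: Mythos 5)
Your proposal is correct and follows essentially the same route as the paper: build the tree-of-spaces model over the Bass--Serre tree for cocompactness, use torsion-freeness of the edge groups to show a nontrivial finite subgroup fixes a unique vertex of $T$ and hence a unique point of the point-collapsed vertex model, and conclude via \cref{lemma:M:NM2}. The only cosmetic difference is that the paper invokes \cref{lemma:collapsing:fixpoints3} to get the one-point-fixed-set vertex models, whereas you invoke \cref{lemma:M:NM2} directly, which is equally valid for the (M)/(NM) step.
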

\begin{proof}
Let $T$ be the Bass-Serre tree of $\mathbf{Y}$. Denote $G=\pi_1(\mathbf{Y})$. Choose cocompact models $X_y$ and $X_P$ for $\underline{E}Y_y$ and $\underline{E}Y_P$ respectively. Then we can construct a model $X$ for $\underline{E}G$ as follows. Replace each vertex $v$ of $T$ by the corresponding $X_{f(v)}$, and each edge $e$ of $T$ by $X_{p(e)}\times [0,1]$. Next if $v$ is a vertex of $e$ glue one one of the $X_e\times {0}$ to $X_v$ using map induced by the homomorphism $X_{p(e)}\to X_{p(v)}$ (and $X_{p(e)}\times {1} \to X_{p(v')}$ where $v'$ is the other vertex of $e$). Hence $X$ inherits a $G$-action and we can easily verify that it is a model for $\underline{E}G$ (compare with \cite[Proposition~4.8]{JLSS}).  Moreover, the orbit space $X/G$ can be constructed using a similar construction using instead $Y$, $X_P/Y_P$ and $X_y/Y_y$. Therefore, since $Y$ is compact and each $X_P/Y_P$ and $X_y/Y_y$ are compact, we have that $X/G$ is also compact. This proves that $G$ admits a cocompact model for $\underline{E}G$.

Assume now that each $X_y$ and $X_P$ are models satisfying the conclusion of \cref{lemma:collapsing:fixpoints3}. Let $H$ be a non-trivial finite subgroup of $G$. Then $H$ cannot fix any edge of $T$, because every edge group of $\mathbf{Y}$ is torsion free. But, since $H$ is finite, has to fix one vertex of $T$. Hence $H$ fixes a unique vertex $v$ of $T$. Hence $H$ acts on $X_{p(v)}$, so $H$ fixes a unique point of $X_{p(v)}$. Therefore every non-trivial finite subgroup of $G$ fixes a unique point of $X$, and by \Cref{lemma:M:NM2} we conclude that $G$ satisfies properties (M) and (NM).
\end{proof}

Our final example are $3$-manifold groups. For more information about $3$-manifold groups, JSJ-decomposition, and the geometrization theorem see \cite{Mo05}.

\subsection{3-manifold groups} Let $M$ be a closed, orientable, connected $3$-manifold with fundamental group $G$. We claim that $G$ is of type $\barf$ and satisfies properties (M) and (NM). The prime decomposition $M=N_1\# \cdots \# N_m$ induces a splitting of $G=G_1*\cdots * G_m$. By \Cref{thm:graph:of:groups}, it is enough to prove that each $G_i$ is of type $\barf$ and satisfies properties (M) and (NM). 

From now on assume $M$ is prime. Using the Perelman-Thurston geometrization theorem we can chop off $M$ along tori to obtain pieces that are either hyperbolic or Sifert fibered. This is the so-called JSJ-decomposition. More explicitly, we can find a collection of tori (possibly empty) $T_1,\dots,T_r $ embedded in $M$ such that (abusing of notation) $M-\bigsqcup_i T_i$ is a disjoint union of manifolds (with boundary if the collection of tori is not empty) such that each piece is either hyperbolic or Seifert fibered.  Hence $G$ is the fundamental group of a graph of groups $\mathbf{Y}$ with vertex groups the fundamental groups of Seifert fibered manifolds or hyperbolic manifolds, and edge group isomorphic to $\dbZ^2$. Again, by \cref{thm:graph:of:groups}, it is enough to prove that all vertex groups in $\mathbf{Y}$ are of type $\barf$ and satisfy properties (M) and (NM). If the collection of tori is empty, then $M$ itself is either hyperbolic or Seifert fibered. If $M$ is hyperbolic, then $G$ is torsion free since $M$ is aspherical, thus $G$ satisfies properties (M) and (NM). Additionally  the universal cover of $M$ is a cocompact model for $\underline{E}G$. If $M$ is Seifert fibered, then $M$ is aspherical unless is covered by the three sphere $\dbS^3$ or by $\dbS^2\times \dbR$. In the $\dbS^3$ case $G$ is finite, while in the $\dbS^2\times \dbR$ case $G$ is either isomorphic to $\dbZ$ or to the infinite dihedral subgroup $D_\infty$. In both cases $G$ satisfies properties (M) and (NM) and is of type $\barf$. Finally, we have to deal with the case of a non-trivial JSJ-decomposition. In this case we can verify case by case that every hyperbolic and Seifert fibered manifold in the JSJ-decomposition is an aspherical manifolds, and therefore their fundamental groups are torsion free. Hence all vertex and edge groups of $\mathbf{Y}$ are torsion free, thus $G=\pi_1(\mathbf{Y})$ is torsion free. Also, we have that $M$ is a cocompact model for $G$. 

We can conclude that the fundamental group of every prime manifold is of type $\barf$ and satisfies properties (M) and (NM). Therefore every $3$-manifold group is of type $\barf$ and satisfies properties (M) and (NM).

\bibliographystyle{alpha} 
\bibliography{myblib}
\end{document}